\let\proof\@undefined
\let\endproof\@undefined
\newcommand{\real}{{\mathbb{R}}}
\newcommand{\reals}{\real}
\newtheorem{theorem}{Theorem}[section]
\newtheorem{remark}[theorem]{Remark}
\newtheorem{example}[theorem]{Example}
\newtheorem{definition}[theorem]{Definition}
\newcommand{\expectation}[1]{\mbox{$\mathbb{E}\left[#1\right]$}}
\newcommand{\fil}{\mathcal F}
\newcommand{\cs}{\mathcal Z}
\newcommand{\risk}{\rho}
\title{\LARGE \bf
Stochastic Optimal Control With \\Dynamic, Time-Consistent Risk Constraints
}
\author{Yin-Lam Chow, Marco Pavone
\thanks{Y.-L. Chow and M. Pavone are with the Department of Aeronautics and Astronautics, Stanford University, Stanford, CA 94305, USA. Email: {\tt \{ychow, pavone\}@stanford.edu}.}}
\begin{document}

\maketitle
\thispagestyle{empty}
\pagestyle{empty}

\begin{abstract}

In this paper we present a dynamic programing approach to stochastic optimal control problems with dynamic, time-consistent risk constraints. Constrained stochastic optimal control problems, which naturally arise when one has to consider multiple objectives, have been extensively investigated in the past 20 years; however, in most formulations, the constraints are formulated as either risk-neutral (i.e., by considering an expected cost), or by applying static, single-period risk metrics with limited attention to ``time-consistency" (i.e., to whether such metrics ensure rational consistency of risk preferences across multiple periods). Recently, significant strides have been made in the development of a rigorous theory of dynamic, \emph{time-consistent} risk metrics for multi-period (risk-sensitive) decision processes; however, their integration within constrained stochastic optimal control problems has received little attention. The goal of this paper is to bridge this gap. First, we formulate the stochastic optimal control problem with dynamic, time-consistent risk constraints and we characterize the tail subproblems (which requires the addition of a Markovian structure to the risk metrics). Second, we develop a dynamic programming approach for its solution, which allows to compute the optimal costs by value iteration. Finally, we discuss both theoretical and practical features of our approach, such as generalizations, construction of optimal control policies, and computational aspects. A simple, two-state example is given to illustrate the problem setup and the solution approach.
\end{abstract}

\section{Introduction}
Constrained stochastic optimal control problems naturally arise in several domains, including engineering, finance, and logistics. For example, in a telecommunication setting, one is often interested in the maximization of the throughput of some traffic subject to constraints on delays \cite{Altman_99, Korilis_95}, or seeks to minimize the average delays of some traffic types, while keeping the delays of other traffic types within a given bound \cite{Nain_86}. Arguably, the most common setup is the optimization of a \emph{risk-neutral expectation} criterion subject to a \emph{risk-neutral} constraint \cite{Chen_04, Piunovskiy_06, Chen_07}. This model, however, is not suitable in scenarios where risk-aversion is a key feature of the problem setup. For example, financial institutions are interested in trading assets while keeping the \emph{riskiness} of their portfolios below a threshold; or, in the optimization of rover planetary missions, one seeks to find a sequence of divert and driving maneuvers so that the rover drive is minimized and the \emph{risk} of a mission failure (e.g., due to a failed landing) is below a user-specified bound \cite{Pavone_12}. 

A common strategy to include risk-aversion in constrained problems is to have constraints where a static, single-period risk metric is applied to the future stream of costs; typical examples include variance-constrained stochastic optimal control problems (see, e.g., \cite{Piunovskiy_06, Sniedovich_80, Mannor_11}), or problems with probability constraints \cite{Chen_04, Piunovskiy_06}. However, using static, single-period risk metrics  in multi-period decision processes can lead to an over or under-estimation of the true dynamic risk, as well as to a potentially ``inconsistent" behavior (whereby risk preferences change in a seemingly irrational fashion between consecutive assessment periods), see \cite{Iancu_11} and references therein. In \cite{Rudloff_11}, the authors provide an example of a portfolio selection problem where the application of a static risk metric in a multi-period context leads a risk-averse decision maker to (erroneously) show risk neutral preferences at intermediate stages.

Indeed, in the recent past, the topic of \emph{time-consistent} risk assessment in multi-period decision processes has been heavily investigated \cite{rus_shapiro_2004, rus_shapiro_06, rus_shapiro_06_2, rus_09, shapiro_12, Cheridito_11, Penner_06}. The key idea behind time consistency is that if a certain outcome is considered less risky in all states of the world at stage $k$, then it should also be considered less risky at stage $k$ \cite{Iancu_11}. Remarkably, in \cite{rus_09}, it is proven that any risk measure that is time consistent can be represented as a composition of one-step conditional risk mappings, in other words, in multi-period settings, risk (as expected) should be compounded over time.

Despite the widespread usage of constrained stochastic optimal control and the significant strides in the theory of dynamic, time-consistent risk metrics, their integration within constrained stochastic optimal control problems has received little attention. The purpose of this paper is to bridge this gap. Specifically, the contribution of this paper is threefold. First, we formulate the stochastic optimal control problem with dynamic, time-consistent risk constraints and we characterize the tail subproblems (which requires the addition of a Markovian structure to the risk metrics). Second, we develop a dynamic programming approach for the solution, which allows to compute the optimal costs by value iteration. There are two main reasons behind our choice of a dynamic programing approach: (a) the dynamic programming approach can be used as an analytical tool in special cases and as the basis for the development of either exact or approximate solution algorithms; and (b) in the risk-neutral setting (i.e., both objective and constraints given as expectations of the sum of stage-wise costs) the dynamic programming approach appears numerical convenient with respect to other approaches (e.g., with respect to the convex analytic approach \cite{Altman_99}) and allows to build all (Markov) optimal control strategies \cite{Piunovskiy_06}. Finally, we discuss both theoretical and practical features of our approach, generalizations, construction of optimal control policies, and computational aspects. A simple, two-state example is given to illustrate the problem setup and the solution approach.

The rest of the paper is structured as follows. In Section \ref{sec:prelim} we present background material for this paper, in particular about dynamic, time-consistent risk measures. In Section \ref{sec:ps} we formally state the problem we wish to solve, while in Section \ref{sec:dp} we present a dynamic programming approach for the solution. In Section \ref{sec:dis} we discuss several aspects of our approach and provide a simple example. Finally, in Section \ref{sec:conc}, we draw our conclusions and offer directions for future work.

\section{Preliminaries}\label{sec:prelim}
In this section we provide some known concepts from the theory of Markov decision processes and of dynamic risk measures, on which we will rely extensively later in the paper.

\subsection{Markov Decision Processes}
A finite Markov Decision Process (MDP) is a four-tuple $(S, U, Q, U(\cdot))$, where $S$, the state space, is a finite set; $U$, the control space, is a finite set; for every $x\in S$, $U(x)\subseteq U$ is a nonempty set which represents the set of admissible controls when the system state is $x$; and, finally, $Q(\cdot|x,u)$ (the transition probability) is a conditional probability on $S$ given the set of admissible state-control pairs, i.e., the sets of pairs $(x,u)$ where $x\in S$ and $u\in U(x)$.

Define the space $H_k$ of admissible histories up to time $k$ by $H_k = H_{k-1} \times S\times U$, for $k\geq 1$, and $H_0=S$. A generic element $h_{0,k}\in H_k$ is of the form $h_{0,k} = (x_0, u_0, \ldots , x_{k-1}, u_{k-1}, x_k)$. Let $\Pi$ be the set of all deterministic policies with the property that at each time $k$ the control is a function of $h_{0,k}$. In other words, $\Pi := \Bigl \{ \{\pi_0: H_0 \rightarrow U,\, \pi_1: H_1 \rightarrow U, \ldots\} | \pi_k(h_{0,k}) \in U(x_k) \text{ for all } h_{0,k}\in H_k, \, k\geq 0 \Bigr\}$.

\subsection{Time-Consistent Dynamic Risk Measures}
This subsection follows closely the discussion in \cite{rus_09}. Consider a probability space $(\Omega, \fil, P)$, a filtration $\fil_1\subset \fil_2 \cdots \subset \fil_N \subset \fil$, and an adapted sequence of random variables $Z_k$, $k\in \{0, \cdots,N\}$. We assume that $\fil_0 = \{\Omega, \emptyset\}$, i.e., $Z_0$ is deterministic. In this paper we interpret the variables $Z_k$ as stage-wise costs. For each $k\in\{1, \cdots, N\}$, define the spaces of random variables with finite $p$th order moment as $\cs_k:=  L_p(\Omega, \fil_k, P)$, $p\in [1,\infty]$; also, let $\cs_{k, N}:=\cs_k \times \cdots \times \cs_N$.

The fundamental question in the theory of dynamic risk measures is the following: how do we evaluate the risk of the subsequence $Z_k, \ldots, Z_N$ from the perspective of
stage $k$? Accordingly, the following definition introduces the concept of dynamic risk measure (here and in the remainder of the paper equalities and inequalities are in the almost sure sense).

\begin{definition}[Dynamic Risk Measure]
A dynamic risk measure is a sequence of mappings  $\risk_{k,N}:\cs_{k, N}\rightarrow\cs_k$, $k\in\{0, \ldots,N\}$, obeying the following monotonicity property:
\[
\risk_{k,N}(Z)\leq \risk_{k,N}(W)   \text{ for all } Z,W \in\cs_{k,N} \text{ such that } Z\leq W.
\]
\end{definition}
The above monotonicity property is arguably a natural requirement for any meaningful dynamic risk measure. Yet, it does not imply the following notion of \emph{time consistency}:
\begin{definition}[Time Consistency]
A dynamic risk measure $\{ \risk_{k,N}\}_{k=0}^N$ is called time-consistent if, for all $0\leq l<k\leq N$ and all sequences $Z, W \in \cs_{l,N}$, the conditions
\begin{equation}
\begin{split}
&Z_i = W_i,\,\, i = l,\cdots,k-1, \text{ and }\\
&\risk_{k,N}(Z_k, \cdots,Z_N)\leq \risk_{k,N}(W_k, \cdots,W_N),
\end{split}
\end{equation}
imply that
\[
 \risk_{l,N}(Z_l, \cdots,Z_N)\leq \risk_{l,N}(W_l, \cdots,W_N).
\]
\end{definition}
In other words, if the $Z$ cost sequence is deemed less risky than the $W$ cost sequence from the perspective of a future time $k$, and they yield identical costs from the current time $l$ to the future time $k$, then the $Z$ sequence should be deemed as less risky at the current time $l$, as well. The pitfalls of time-inconsistent dynamic risk measures have already been mentioned in the introduction and are discussed in detail in  \cite{Cheridito_09, shapiro_09, Iancu_11}.

The issue then is what additional ``structural" properties are required for a dynamic risk measure to be time consistent. To answer this question we need one more definition:
\begin{definition}[Coherent one-step conditional risk measures]
A coherent one-step conditional risk measures is a mapping $\risk_k:\cs_{k+1}\rightarrow \cs_k$, $k\in\{0,\ldots,N\}$, with the following four properties:
\begin{itemize}
\item Convexity: $\risk_k(\lambda Z + (1-\lambda)W)\leq \lambda\risk_k(Z) + (1-\lambda)\risk_k(W)$, $\forall \lambda\in[0,1]$ and $Z,W \in\cs_{k+1}$;
\item Monotonicity:  if $Z\leq W$ then $\risk_k(Z)\leq\risk_k(W)$, $\forall Z,W \in\cs_{k+1}$;
\item Translation invariance:  $\risk_k(Z+W)=Z + \risk_k(W)$, $\forall Z\in\cs_k$ and $W \in \cs_{k+1}$;
\item Positive homogeneity: $\risk_k(\lambda Z) = \lambda \risk_k(Z)$, $\forall Z \in \cs_{k+1}$ and $\lambda\geq 0$.
\end{itemize}
\end{definition}

We are now in a position to state the main result of this section.
\begin{theorem}[Dynamic, time-consistent risk measures]\label{thrm:tcc}
Consider, for each $k\in\{0,\cdots,N\}$, the mappings $\risk_{k,N}:\cs_{k, N}\rightarrow\cs_k$ defined as
\begin{equation}\label{eq:tcrisk}
\begin{split}
\risk_{k,N} &= Z_k + \risk_k(Z_{k+1} + \risk_{k+1}(Z_{k+2}+\ldots+\\
	&\qquad\risk_{N-2}(Z_{N-1}+\risk_{N-1}(Z_N))\ldots)),
\end{split}
\end{equation}
where the $\risk_k$'s are coherent one-step risk measures. Then, the ensemble of such mappings is a time-consistent dynamic risk measure.
\end{theorem}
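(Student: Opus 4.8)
The plan is to exploit the nested composition structure in \eqref{eq:tcrisk}, which shows that the family $\{\risk_{k,N}\}$ satisfies the one-step backward recursion
\begin{equation}
\risk_{k,N}(Z_k,\ldots,Z_N) = Z_k + \risk_k\bigl(\risk_{k+1,N}(Z_{k+1},\ldots,Z_N)\bigr),
\end{equation}
with terminal condition $\risk_{N,N}(Z_N)=Z_N$. Since $\risk_{k+1,N}$ maps into $\cs_{k+1}$ and $\risk_k:\cs_{k+1}\to\cs_k$, each $\risk_{k,N}$ is a well-defined mapping $\cs_{k,N}\to\cs_k$, as required. Both claims of the theorem --- that the ensemble is a dynamic risk measure (i.e., monotone) and that it is time-consistent --- will then follow by backward induction, and the only property of the one-step measures that I expect to use is their monotonicity.

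First I would establish the monotonicity property required of a dynamic risk measure. Fix $Z,W\in\cs_{k,N}$ with $Z\le W$ componentwise, and prove $\risk_{j,N}(Z)\le \risk_{j,N}(W)$ for $j=N,N-1,\ldots,k$ by backward induction on $j$ (here $\risk_{j,N}(Z)$ abbreviates $\risk_{j,N}(Z_j,\ldots,Z_N)$). The base case $j=N$ is immediate since $\risk_{N,N}(Z)=Z_N\le W_N=\risk_{N,N}(W)$. For the inductive step, assuming $\risk_{j+1,N}(Z)\le\risk_{j+1,N}(W)$, monotonicity of $\risk_j$ gives $\risk_j(\risk_{j+1,N}(Z))\le\risk_j(\risk_{j+1,N}(W))$, and adding the inequality $Z_j\le W_j$ yields $\risk_{j,N}(Z)\le\risk_{j,N}(W)$ through the recursion above. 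Taking $j=k$ completes the proof of monotonicity.

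Next I would prove time consistency. Fix $0\le l<k\le N$ and $Z,W\in\cs_{l,N}$ satisfying $Z_i=W_i$ for $i=l,\ldots,k-1$ together with $\risk_{k,N}(Z_k,\ldots,Z_N)\le\risk_{k,N}(W_k,\ldots,W_N)$. I would show $\risk_{j,N}(Z)\le\risk_{j,N}(W)$ for $j=k,k-1,\ldots,l$ by backward induction. The base case $j=k$ is exactly the second hypothesis. For the inductive step, suppose the inequality holds at $j+1$ for some $l\le j\le k-1$; monotonicity of $\risk_j$ then gives $\risk_j(\risk_{j+1,N}(Z))\le\risk_j(\risk_{j+1,N}(W))$. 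Because $j\le k-1$ the stage costs coincide, $Z_j=W_j$, so the recursion yields $\risk_{j,N}(Z)=Z_j+\risk_j(\risk_{j+1,N}(Z))\le W_j+\risk_j(\risk_{j+1,N}(W))=\risk_{j,N}(W)$. The case $j=l$ is the desired conclusion.

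The argument is essentially a routine double induction, so I do not anticipate a genuinely hard step; the one point requiring care is the bookkeeping that distinguishes the two inductions. In the monotonicity proof the additive stage terms satisfy only $Z_j\le W_j$ and are carried along as an inequality, whereas in the time-consistency proof the hypothesis $Z_i=W_i$ for $i<k$ forces the additive terms to match \emph{exactly} at every backward step, which is precisely what lets the single inequality assumed at stage $k$ propagate undisturbed down to stage $l$. It is also worth noting that translation invariance need not be invoked separately, since the additive structure $Z_k+\risk_k(\cdot)$ is already built into the definition \eqref{eq:tcrisk}; convexity and positive homogeneity of the one-step measures play no role in this particular result.
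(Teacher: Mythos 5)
Your proof is correct. The paper itself does not supply an argument for this theorem --- its ``proof'' is a pointer to the reference \cite{rus_09} --- so there is nothing in the text to compare against line by line; what you have written is essentially the standard argument from that reference, namely a double backward induction driven entirely by the monotonicity of the one-step mappings $\risk_j$. Both inductions are sound: the recursion $\risk_{j,N}(Z_j,\ldots,Z_N)=Z_j+\risk_j\bigl(\risk_{j+1,N}(Z_{j+1},\ldots,Z_N)\bigr)$ with $\risk_{N,N}(Z_N)=Z_N$ follows immediately from the nested form of \eqref{eq:tcrisk}, the well-definedness of $\risk_{k,N}:\cs_{k,N}\to\cs_k$ is correctly reduced to the fact that each $\cs_{j}$ is a vector space and $\risk_j:\cs_{j+1}\to\cs_j$, and your observation that the exact equality $Z_i=W_i$ for $i<k$ (as opposed to a mere inequality) is what allows the single hypothesis at stage $k$ to propagate down to stage $l$ is precisely the right point of care. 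Your closing remark is also accurate: convexity, positive homogeneity, and translation invariance of the $\risk_j$ are not needed for this particular statement (they matter for the converse direction --- that every time-consistent measure with suitable properties admits such a representation --- which is the harder half of the result in \cite{rus_09} and is not claimed here).
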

\begin{proof}
See \cite{rus_09}.
\end{proof}
Remarkably, Theorem 1 in \cite{rus_09} shows (under weak assumptions) that the ``multi-stage composition" in equation \eqref{eq:tcrisk} is indeed \emph{necessary for time consistency}. Accordingly, in the remainder of this paper, we will focus on the \emph{dynamic, time-consistent risk measures} characterized in Theorem \ref{thrm:tcc}.

With dynamic, time-consistent risk measures, since at stage $k$ the value of $\risk_k$ is $\fil_k$-measurable, the evaluation of risk can depend on the whole past (even though in a time-consistent way). On the one hand, this generality appears to be of little value in most practical cases, on the other hand, it leads to optimization problems that are intractable from a computational standpoint (and, in particular, do not allow for a dynamic programing solution). For these reasons, in this paper we consider a (slight) refinement of the concept of dynamic, time-consistent risk measure, which involves the addition of a Markovian structure \cite{rus_09}.  
\begin{definition}[Markov dynamic risk measures]\label{def:Markov}
Let $\mathcal V:=L_p(S, \mathcal B, P)$ be the space of random variables on $S$ with finite $p$\emph{th} moment. Given a controlled Markov process $\{x_k\}$, a Markov dynamic risk measure is a dynamic, time-consistent risk measure if each coherent one-step risk measure $\risk_k:\cs_{k+1}\rightarrow \cs_k$ in equation \eqref{eq:tcrisk} can be written as:
\begin{equation}\label{eq:Markov}
\risk_k(V(x_{k+1})) = \sigma_k(V(x_{k+1}),x_k, Q(x_{k+1} |x_k, u_k)),
\end{equation}
for all $V(x_{k+1})\in \mathcal V$ and $u\in U(x_k)$, where $\sigma_k$ is a coherent one-step risk measure on $\mathcal V$ (with the additional technical property that for every $V(x_{k+1})\in \mathcal V$ and $u\in U(x_k)$ the function $x_k \mapsto \sigma_k(V(x_{k+1}), x_k, Q(x_{k+1}|x_k, u_k))$ is an element of $\mathcal V$). 
\end{definition}

In other words, in a Markov dynamic risk measures, the evaluation of risk is not allowed to depend on the whole past.

\begin{example} An important example of coherent one-step risk measure satisfying the requirements for Markov dynamic risk measures (Definition \ref{def:Markov}) is the mean-semideviation risk function:
\begin{align}
\risk_k(V) = \expectation{V} + \lambda\, \Bigl (\expectation{[V - \expectation{V}]_{+}^p} \Bigr)^{1/p},\label{MD_risk}
\end{align}
where $p\in[1,\infty)$, $[z]_+^p:=(\max(z,0))^p$, and $\lambda\in [0,\, 1].$
\end{example}
Other important examples include the conditional average value at risk and, of course, the risk-neutral expectation \cite{rus_09}.
Accordingly, in the remainder of this paper we will restrict our analysis to Markov dynamic risk measures.

\section{Problem Statement}\label{sec:ps}
In this section we formally state the problem we wish to solve. Consider an MDP and let $c : S \times U \rightarrow \reals$ and $d : S \times U \rightarrow \reals$ be functions which denote costs associated with state-action pairs. Given a policy $\pi\in \Pi$, an initial state $x_0\in S$, and an horizon $N\geq 1$, the cost function is defined as
\[
J^{\pi}_N(x_0):=\expectation{\sum_{k=0}^{N-1}\, c(x_k, u_k)},
\]
and the risk constraint is defined as
\[
R^{\pi}_N(x_0):= \risk_{0,N}\Bigl(d(x_0,u_0), \ldots, d(x_{N-1},u_{N-1}),0\Bigr),
\]
where $\risk_{k,N}(\cdot)$, $k\in \{0,\ldots, N-1\}$, is a time consistent multi-period risk measure with $\risk_i$ being a Markov risk measure for any $i\in[k,N-1]$ (for simplicity, we do not consider terminal costs, even though their inclusion is straightforward).  The problem we wish to solve is then as follows:
\begin{quote} {\bf Optimization problem $\mathcal{OPT}$} --- Given an initial state $x_0\in S$, a time horizon $N\geq 1$, and a risk threshold $r_0 \in \reals$, solve
\begin{alignat*}{2}
\min_{\pi \in \Pi} & & \quad&J^{\pi}_N(x_0) \\  
\text{subject to} & & \quad&R^{\pi}_N(x_0) \leq r_0.
\end{alignat*}
\end{quote}
If problem $\mathcal OPT$ is not feasible, we say that its value is $\overline C$, where $\overline C$ is a ``large" constant (namely, an upper bound over the $N$-stage cost). Note that, when the problem is feasible, an optimal policy always exists since the state and control spaces are finite. When $\risk_{0,N}$ is replaced by an expectation, we recover the usual risk-neutral constrained stochastic optimal control problem studied, e.g., in \cite{Chen_04, Piunovskiy_06}. In the next section we present a dynamic programing approach to solve problem $\mathcal OPT$.

\section{A Dynamic Programming Algorithm for Risk-Constrained Multi-Stage Decision-Making}\label{sec:dp}
In this section we discuss a dynamic programming approach to solve problem $\mathcal{OPT}$. We first characterize the relevant value functions, and then we present the Bellman's equation that such value functions have to satisfy.

\subsection{Value functions}
Before defining the value functions we need to define the tail subproblems. For a given $k\in \{0,\ldots,N-1\}$ and a given state $x_k\in S$, we define the \emph{sub-histories} as $h_{k,j}:=(x_k, u_k, \ldots,x_j)$ for $j\in \{k,\ldots, N\}$; also, we define the \emph{space of truncated policies} as $\Pi_k:=\Bigl \{ \{\pi_k, \pi_{k+1}, \ldots\} | \pi_j(h_{k,j}) \in U(x_j) \text{ for } j\geq k \Bigr\}$. For a given stage $k$ and state $x_k$, the cost of the tail process associated with a policy $\pi\in \Pi_k$ is simply $J^{\pi}_N(x_k):=\expectation{\sum_{j=k}^{N-1}\, c(x_j, u_j)}$. The risk associated with the tail process is:
\[
R^{\pi}_N(x_k):= \risk_{k,N}\Bigl(d(x_k,u_k), \ldots, d(x_{N-1},u_{N-1}),0\Bigr),
\]
which is \emph{only} a function of the current state $x_k$ and does \emph{not} depend on the history $h_{0,k}$ that led to $x_k$. This crucial fact stems from the assumption that $\{\risk_{k,N}\}_{k=0}^{N-1}$ is a Markov dynamic risk measure, and hence the evaluation of risk only depends on the \emph{future} process and on the present state $x_k$ (formally, this can be easily proven by repeatedly applying equation \eqref{eq:Markov}). Hence, the tail subproblems are \emph{completely} specified by the knowledge of $x_k$ and are defined as
\begin{alignat}{2}
\min_{\pi \in \Pi_k} & & \quad&J^{\pi}_N(x_k) \label{problem_SOCP}\\
\text{subject to} & & \quad&R^{\pi}_N(x_k) \leq r_k(x_k),\label{constraint_SOCP}
\end{alignat}
for a given (undetermined) threshold value $r_k(x_k) \in \reals$ (i.e., the tail subproblems are specified up to a threshold value). We are interested in characterizing a ``minimal" set of \emph{feasible} thresholds at each step $k$, i.e., a ``minimal" interval of thresholds for which the subproblems are feasible.

The minimum risk-to-go for each state $x_k\in S$ and $k\in \{0,\ldots,N-1\}$ is given by:
\[
\underline{R}_N(x_k):=\min_{\pi\in \Pi_k} \, R_N^{\pi}(x_k).
\]
Since $\{\risk_{k,N}\}_{k=0}^{N-1}$ is a Markov dynamic risk measure, $\underline{R}_N(x)$ can be computed by using a dynamic programming recursion (see Theorem 2 in \cite{rus_09}). The function $\underline{R}_N(x_k)$ is clearly the lowest value for a feasible constraint threshold. To characterize the upper bound, let:
\[
\risk_{\text{max}}:=\max_{k\in\{0,\ldots,N-1\}} \, \, \max_{(x,u)\in S\times U} \, \risk_k(d(x,u)).
\]
By the monotonicity and translation invariance of Markov dynamic risk measures, one can easily show that
\[
\max_{\pi\in \Pi_k} \, R_N^{\pi}(x_k)\leq (N-k)\risk_{\text{max}}:=\overline{R}_N.
\]
Accordingly, for each $k\in\{0,\ldots, N-1\}$ and $x_k\in S$, we define the set of feasible constraint thresholds as 
\[
\Phi_k(x_k):=[\underline{R}_N(x_k), \overline{R}_N],\quad \Phi_N(x_N):=\{0\}.
\]
(Indeed, thresholds larger than $\overline{R}_N$ would still be feasible, but would be redundant and would increase the complexity of the optimization problem.)

The value functions are then defined as follows:
\begin{itemize}
\item If $k<N$ and $r_k \in \Phi_k(x_k)$:
\begin{alignat*}{2}
V_k(x_k, r_k)  = &\min_{\pi \in \Pi_k} & \quad&J^{\pi}_N(x_k) \\  
&\text{subject to} & &R^{\pi}_N(x_k) \leq r_k(x_k);
\end{alignat*}
the minimum is well-defined since the state and control spaces are finite.
\item iI $k\leq N$ and $r_k \notin \Phi_k(x_k)$:
\[
V_k(x_k, r_k)  = \overline{C};
\]
\item when $k=N$ and $r_N=0$:
\[
V_N(x_N,r_N) = 0.
\]
\end{itemize}

Clearly, for $k=0$, we have the definition of problem $\mathcal OPT$.

\subsection{Dynamic programming recursion}
In this section we prove that the value function can be computed by dynamic programming. Let $B(S)$ denote the space of real-valued bounded functions on $S$, and $B(S \times \reals)$ denote the space of real-valued bounded functions on $S\times \reals$. For $k\in \{0, \ldots, N-1\}$, we define the dynamic programming operator $T_k[V_k] : B(S \times \reals) \mapsto B(S \times \reals)$ according to the equation:
\begin{equation}\label{eq:T}
\begin{split}
T_k[V_{k+1}]&(x_k, r_k) := \inf_{(u,r^{\prime})\in F_k(x_k, r_k)} \, \biggl\{c(x_k,u) \, \,+\\
& \ \sum_{x_{k+1}  \in S} \, Q(x_{k+1}|x_k,u)\, V_{k+1}(x_{k+1}, r^{\prime}(x_{k+1})) \biggr\},
\end{split}
\end{equation}
where $F_k \subset \reals \times B(S)$ is the set of control/threshold \emph{functions}:
\begin{equation*}
\begin{split}
F_k(x_k,& r_k):= \biggr\{(u, r^{\prime}) \Big | u\in U(x_k), r^{\prime}(x^{\prime}) \in \Phi_{k+1}(x^{\prime}) \text{ for}\\& \text{all } x^{\prime} \in S, \text{ and } d(x_k, u) + \risk_k(r^{\prime}(x_{k+1}))  \leq r_k\biggl\}.
\end{split}
\end{equation*}
If $F_k(x_k,r_k) = \emptyset$, then $T_k[V_{k+1}](x_k, r_k)=\overline C$.

Note that, for a given state and threshold constraint, set $F_k$ characterizes the set of feasible pairs of actions and subsequent constraint
thresholds. Feasible subsequent constraint thresholds are thresholds which if satisfied at the next stage ensure that the current
state satisfies the given threshold constraint (see \cite{Chen_07} for a similar statement in the risk-neutral case). Also, note that equation \eqref{eq:T} involves a functional minimization over the Banach space $B(S)$. Indeed, since $S$ is finite, $B(S)$ is isomorphic with $\reals^{|S|}$, hence the minimization in equation \eqref{eq:T} can be re-casted as a regular (although possibly large) optimization problem in the Euclidean space. Computational aspects are further discussed in the next section. 

We are now in a position to prove the main result of this paper.
\begin{theorem}[Bellman's equation with risk constraints]\label{TC_good}
Assume that the infimum in equation \eqref{eq:T} is attained. Then, for all $k\in \{0, \ldots, N-1\}$, the optimal cost functions satisfy the Bellman's equation:
\[
V_k(x_k, r_k) = T_k[V_{k+1}](x_k, r_k).
\]
\end{theorem}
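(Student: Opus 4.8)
The plan is to establish the Bellman equation by proving the two inequalities $V_k(x_k,r_k)\le T_k[V_{k+1}](x_k,r_k)$ and $V_k(x_k,r_k)\ge T_k[V_{k+1}](x_k,r_k)$ separately. The workhorse for both directions is a pair of one-step decompositions. For the cost, linearity of expectation gives the standard recursion $J^{\pi}_N(x_k)=c(x_k,u_k)+\sum_{x_{k+1}\in S}Q(x_{k+1}|x_k,u_k)\,J^{\pi}_N(x_{k+1})$. For the risk, I would unwind the composition in \eqref{eq:tcrisk} by one step to obtain $R^{\pi}_N(x_k)=d(x_k,u_k)+\risk_k\bigl(R^{\pi}_N(x_{k+1})\bigr)$; here the Markov property (Definition \ref{def:Markov}) is what guarantees that $R^{\pi}_N(x_{k+1})$ is a genuine element of $\mathcal V$, i.e.\ a function of $x_{k+1}$ alone, so that $\risk_k$ may legitimately act on it. Before the two main steps I would dispatch the boundary cases: for $r_k\notin\Phi_k(x_k)$ both sides equal $\overline C$ (on the left by definition, on the right because $F_k(x_k,r_k)=\emptyset$), and the terminal identity $V_N(x_N,0)=0$ seeds the recursion.

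For the inequality $V_k\le T_k[V_{k+1}]$ I would start from a minimizing pair $(u,r^{\prime})\in F_k(x_k,r_k)$, which exists by the standing assumption that the infimum in \eqref{eq:T} is attained. For each $x_{k+1}\in S$, since $r^{\prime}(x_{k+1})\in\Phi_{k+1}(x_{k+1})$ the tail subproblem is feasible, so by finiteness there is a policy $\pi^{x_{k+1}}\in\Pi_{k+1}$ attaining $V_{k+1}(x_{k+1},r^{\prime}(x_{k+1}))$ with $R^{\pi^{x_{k+1}}}_N(x_{k+1})\le r^{\prime}(x_{k+1})$. I would then stitch these into a single policy $\pi\in\Pi_k$ that plays $u$ at stage $k$ and, upon landing in $x_{k+1}$, switches to $\pi^{x_{k+1}}$. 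Feasibility follows from the risk decomposition together with monotonicity of $\risk_k$: $R^{\pi}_N(x_k)=d(x_k,u)+\risk_k\bigl(R^{\pi}_N(x_{k+1})\bigr)\le d(x_k,u)+\risk_k\bigl(r^{\prime}(x_{k+1})\bigr)\le r_k$, the last step by the defining inequality of $F_k$. The cost decomposition then yields $J^{\pi}_N(x_k)=c(x_k,u)+\sum_{x_{k+1}}Q(x_{k+1}|x_k,u)\,V_{k+1}(x_{k+1},r^{\prime}(x_{k+1}))$, which equals the bracketed expression in \eqref{eq:T} for the chosen $(u,r^{\prime})$, so $V_k(x_k,r_k)\le T_k[V_{k+1}](x_k,r_k)$.

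For the reverse inequality I would take any policy $\pi\in\Pi_k$ feasible for the problem defining $V_k(x_k,r_k)$, so $R^{\pi}_N(x_k)\le r_k$, and read off its first control $u=\pi_k(x_k)$ and its tail behaviour. The natural candidate for the threshold function is the \emph{achieved} risk-to-go, $r^{\prime}(x_{k+1}):=R^{\pi}_N(x_{k+1})$; by the Markov property this is a well-defined element of $B(S)$. Two checks remain. First, $r^{\prime}(x_{k+1})\in\Phi_{k+1}(x_{k+1})$, because $R^{\pi}_N(x_{k+1})\ge\underline R_N(x_{k+1})$ by definition of the minimum risk-to-go and $R^{\pi}_N(x_{k+1})\le\overline R_N$ by the uniform upper bound established earlier. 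Second, $(u,r^{\prime})\in F_k(x_k,r_k)$, which is precisely the risk decomposition read as $d(x_k,u)+\risk_k\bigl(r^{\prime}(x_{k+1})\bigr)=R^{\pi}_N(x_k)\le r_k$. Since the tail of $\pi$ is feasible for $V_{k+1}(x_{k+1},r^{\prime}(x_{k+1}))$ with its constraint met at equality, $J^{\pi}_N(x_{k+1})\ge V_{k+1}(x_{k+1},r^{\prime}(x_{k+1}))$. Combining with the cost decomposition, $J^{\pi}_N(x_k)\ge c(x_k,u)+\sum_{x_{k+1}}Q(x_{k+1}|x_k,u)\,V_{k+1}(x_{k+1},r^{\prime}(x_{k+1}))\ge T_k[V_{k+1}](x_k,r_k)$; taking the infimum over feasible $\pi$ gives $V_k\ge T_k[V_{k+1}]$.

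The step I expect to be the crux is the one-step risk decomposition $R^{\pi}_N(x_k)=d(x_k,u_k)+\risk_k(R^{\pi}_N(x_{k+1}))$ and, with it, the legitimacy of treating $R^{\pi}_N(x_{k+1})$ as a state-indexed threshold. Everything else is the familiar two-inequality dynamic-programming bookkeeping, but the entire scheme hinges on the constraint being separable across stages in exactly this compositional form; this is where the Markov refinement of the time-consistent risk measure (Definition \ref{def:Markov}) is indispensable, since without it $R^{\pi}_N(x_{k+1})$ would depend on the full history $h_{0,k+1}$ and could not serve as a scalar threshold parameterizing the tail value function $V_{k+1}(\cdot,\cdot)$.
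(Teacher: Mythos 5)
Your proposal is correct and follows essentially the same two-inequality argument as the paper: truncating a feasible/optimal policy and using its achieved risk-to-go $R^{\pi}_N(x_{k+1})$ as a valid threshold function in $F_k$ for one direction, and stitching the minimizing pair $(u^*,r^{\prime,*})$ with optimal tail policies for the other, with the one-step decomposition $R^{\pi}_N(x_k)=d(x_k,u_k)+\risk_k\bigl(R^{\pi}_N(x_{k+1})\bigr)$ (justified by the Markov structure) doing the work in both. The only differences are cosmetic: you make explicit the verification that the achieved risk-to-go lies in $\Phi_{k+1}$ and that $(u,r^{\prime})\in F_k$, which the paper leaves implicit.
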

\begin{proof}
The proof style is similar to that of Theorem 3.1 in \cite{Chen_04}. The proof consists of two steps. First, we show that $V_k(x_k, r_k) \geq T_k[V_{k+1}](x_k, r_k)$ for all pairs $(x_k, r_k) \in S\times \reals$. Second, we show  $V_k(x_k, r_k) \leq T_k[V_{k+1}](x_k, r_k)$ for all pairs $(x_k, r_k) \in S\times \reals$. These two results will prove the claim.

\emph{Step (1)}. If $r_k \notin \Phi_k(x_k)$, then, by definition, $V_k(x_k, r_k) = \overline{C}$. Also, $r_k \notin \Phi_k(x_k)$ implies that $F_k(x_k, r_k)$ is empty (this can be easily proven by contradiction). Hence, $T_k[V_{k+1}](x_k, r_k) = \overline C$. Therefore, if $r_k \notin \Phi_k(x_k)$,
\begin{equation}\label{eq:easy}
V_k(x_k, r_k) =  \overline{C} = T_k[V_{k+1}](x_k, r_k),
\end{equation}
i.e., $V_k(x_k, r_k)\geq T_k[V_{k+1}](x_k, r_k)$.

Assume, now, $r_k \in \Phi_k(x_k)$. Let $\pi^* \in \Pi_k$ be the optimal policy that yields the optimal cost $V_k(x_k, r_k)$. Construct the ``truncated" policy $\bar \pi \in \Pi_{k+1}$ according to:
\[
\bar \pi_j(h_{k+1,j}):= \pi^*_j(x_k, \pi_k^*(x_k), h_{k+1,j}), \quad \text{ for } j\geq k+1.
\]
In other words, $\bar \pi$ is a policy in $\Pi_{k+1}$ that acts as prescribed by $\pi^*$. By applying the law of total expectation, we can write:
\begin{equation*}
\begin{split}
&V_k(x_k, r_k) = \expectation{\sum_{j=k}^{N-1} \, c(x_j, \pi^*_j(h_{k,j}))} = \\
&\quad c(x_k, \pi_k^*(x_k)) +  \expectation{\sum_{j=k+1}^{N-1} \, c(x_j, \pi^*_j(h_{k,j}))}=\\
&\quad c(x_k, \pi_k^*(x_k)) +   \expectation{\expectation{\sum_{j=k+1}^{N-1} \, c(x_j, \pi_j^*(h_{k,j})) \, \Big | \, h_{k,k+1}}}.
\end{split}
\end{equation*}
Note that $\expectation{\sum_{j=k+1}^{N-1} \, c(x_j, \pi^*_j(h_{k,j})) \, \Big | \, h_{k,k+1}} = J_N^{\bar \pi}(x_{k+1})$.
Clearly, the truncated policy $\bar \pi$ is a feasible policy for the tail subproblem 
\begin{alignat*}{2}
\min_{\pi \in \Pi_{k+1}} & & \quad&J^{\pi}_N(x_{k+1}) \\  
\text{subject to} & & \quad&R^{\pi}_N(x_{k+1}) \leq R^{\bar \pi}_N(x_{k+1}).
\end{alignat*}
Collecting the above results, we can write 
\begin{equation*}
\begin{split}
V_k(x_k, r_k)  &= c(x_k, \pi_k^*(x_k))  +  \expectation{J_N^{\bar \pi}(x_{k+1})} \\
&\geq c(x_k, \pi_k^*(x_k))  +  V_{k+1}(x_{k+1}, R^{\bar \pi}_N(x_{k+1}))\\
&\geq T_k[V_{k+1}](x_k, r_k),
\end{split}
\end{equation*}
where the last inequality follows from the fact that $R^{\bar \pi}_N(\cdot)$ can be viewed as a valid threshold function in the minimization in equation \eqref{eq:T}.

\emph{Step (2)}.  If $r_k \notin \Phi_k(x_k)$, equation \eqref{eq:easy} holds and, therefore, $V_k(x_k, r_k)\leq T_k[V_{k+1}](x_k, r_k)$.

Assume $r_k \notin \Phi_k(x_k)$. For a given pair $(x_k, r_k)$, where $r_k \in \Phi_k(x_k)$, let $u^*$ and $r^{\prime, *}$ the minimizers in equation \eqref{eq:T} (here we are exploiting the assumption that the minimization problem in equation \eqref{eq:T} admits a minimizer). By definition, $r^{\prime, *}(x_{k+1}) \in \Phi_{k+1}(x_{k+1})$ for all $x_{k+1}\in S$. Also, let $\pi^* \in \Pi_{k+1}$ the optimal policy for the tail subproblem:
\begin{alignat*}{2}
\min_{\pi \in \Pi_{k+1}} & & \quad&J^{\pi}_N(x_{k+1}) \\  
\text{subject to} & & \quad&R^{\pi}_N(x_{k+1}) \leq r^{\prime,*}(x_{k+1}).
\end{alignat*}
Construct  the ``extended" policy $\bar \pi \in \Pi_k$ as follows:
\[
\bar \pi_k(x_k) = u^*, \text{ and } \bar \pi_j(h_{k,j}) = \pi^*_j(h_{k+1,j}) \text{ for } j\geq k+1.
\]
Since $\pi^*$ is an optimal, and a fortiori feasible, policy for the tail subproblem (from stage $k+1$) with threshold function $r^{\prime, *}$, the policy $\bar \pi \in \Pi_k$ is a feasible policy for the tail subproblem (from stage $k$):
\begin{alignat*}{2}
\min_{\pi \in \Pi_{k}} & & \quad&J^{\pi}_N(x_{k}) \\  
\text{subject to} & & \quad&R^{\pi}_N(x_{k}) \leq r_k.
\end{alignat*}
Hence, we can write
\begin{equation*}
\begin{split}
&V_{k}(x_k, r_k) \leq J^{\bar \pi}_N(x_k) = \\
&\quad c(x_k, \bar \pi_k(x_k)) + \expectation{\expectation{\sum_{j=k+1}^{N-1} \, c(x_j, \bar \pi_j(h_{k,j})) \, \Big | \, h_{k,k+1}}}.
\end{split}
\end{equation*}
Note that $\expectation{\sum_{j=k+1}^{N-1} \, c(x_j, \bar \pi_j(h_{k,j})) \, \Big | \, h_{k,k+1}} = J_N^{\pi^*}(x_{k+1})$. Hence, from the definition of $\pi^*$, one easily obtains:
\begin{equation*}
\begin{split}
&V_{k}(x_k, r_k) \leq c(x_k, \bar \pi_k(x_k)) +  \expectation{J^{\pi^*}_N(x_{k+1})} =  c(x_k, u^*) \, +\\
&\qquad  \sum_{x_{k+1}\in S}  Q(x_{k+1}|x_k, u^*) V_{k+1}(x_{k+1}, r^{\prime, *}(x_{k+1}))=
\\
&\qquad T_k[V_{k+1}](x_k, r_k). 
\end{split}
\end{equation*}

Collecting the above results, the claim follows.
\end{proof}

\begin{remark}[On the assumption in Theorem \ref{TC_good}] In Theorem \ref{TC_good} we assume that the infimum in equation \eqref{eq:T} is attained. This is indeed true under very weak conditions (namely that $U(x_k)$ is a compact set, $\sigma_k(\nu(x_{k+1}),x_{k+1},Q)$ is a lower semi-continuous function in $Q$, $Q(x_k,u_k)$ is continuous in $u_k$ and the stage-wise cost $c$ and $d$ are lower semi-continuous in $u_k$). The proof of this statement is omitted in the interest of brevity and is left for a forthcoming publication.
%
\end{remark}

\section{Discussion}\label{sec:dis}
In this section we show how to construct optimal policies, discuss computational aspects, and present a simple two-state example for machine repairing.

\subsection{Construction of optimal policies}
Under the assumption of Theorem \ref{TC_good}, optimal control policies can be constructed as follows. For any given $x_k\in S$ and $r_k \in \Phi_k(x_k)$, let $u^*(x_k, r_k)$ and $r^{\prime}(x_k, r_k)(\cdot)$ be the minimizers in equation \eqref{eq:T} (recall that $r^{\prime}$ is a function). 
\begin{theorem}[Optimal policies]\label{them:optPoli}
Assume that the infimum in equation \eqref{eq:T} is attained. Let $\pi \in \Pi$ be a policy recursively defined as follows:
\begin{equation*}
\begin{split}
\pi_k(h_{0,k}) = u^*(x_k, r_k) \text{ \em with  } r_k = r^{\prime}(x_{k-1},r_{k-1})(x_k),
\end{split}
\end{equation*}
when $k\in \{1,\ldots, N-1\}$, and
\[
\pi(x_0) = u^*(x_0, r_0),
\]
for a given threshold $r_0\in \Phi_0(x_0)$. Then, $\pi$ is an optimal policy for problem $\mathcal{OPT}$ with initial condition $x_0$ and constraint threshold $r_0$.
\end{theorem}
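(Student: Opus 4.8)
The plan is to proceed by backward induction on the stage index $k$, treating the threshold as an augmented state that is carried forward along the realized trajectory by the rule $r_k = r^{\prime}(x_{k-1}, r_{k-1})(x_k)$. Concretely, for every $k \in \{0, \ldots, N\}$, every state $x_k \in S$, and every admissible threshold $r_k \in \Phi_k(x_k)$, I would prove the joint statement that the tail policy generated by the recursion starting from the pair $(x_k, r_k)$ is both \emph{feasible} for the tail subproblem from stage $k$ with threshold $r_k$ (that is, $R^{\pi}_N(x_k) \leq r_k$) and \emph{optimal} (that is, $J^{\pi}_N(x_k) = V_k(x_k, r_k)$). The base case $k = N$ is immediate: $r_N = 0$, the tail cost vanishes, and $V_N(x_N, 0) = 0$.

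For the inductive step, assume the joint statement holds at stage $k+1$, and let $u^{*} = u^{*}(x_k, r_k)$ and $r^{\prime} = r^{\prime}(x_k, r_k)(\cdot)$ denote the minimizers in equation \eqref{eq:T}. The definition of $F_k(x_k, r_k)$ guarantees $r^{\prime}(x_{k+1}) \in \Phi_{k+1}(x_{k+1})$ for every $x_{k+1} \in S$, so the inductive hypothesis applies at each successor state with propagated threshold $r_{k+1} = r^{\prime}(x_{k+1})$. Conditioning on $x_{k+1}$ and invoking the cost part of the hypothesis then yields
\begin{equation*}
\begin{split}
J^{\pi}_N(x_k) &= c(x_k, u^{*}) \\
&\quad + \sum_{x_{k+1} \in S} Q(x_{k+1}|x_k, u^{*})\, V_{k+1}(x_{k+1}, r^{\prime}(x_{k+1})) \\
&= T_k[V_{k+1}](x_k, r_k) = V_k(x_k, r_k),
\end{split}
\end{equation*}
where the penultimate equality holds because $(u^{*}, r^{\prime})$ attains the infimum in \eqref{eq:T}, and the final equality is Bellman's equation (Theorem \ref{TC_good}).

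The feasibility part is the crux. Unrolling the compositional form \eqref{eq:tcrisk} of the time-consistent risk measure and using its Markov structure \eqref{eq:Markov} to peel off the first stage, I would write
\[
R^{\pi}_N(x_k) = d(x_k, u^{*}) + \risk_k\bigl(R^{\pi}_N(x_{k+1})\bigr).
\]
By the feasibility part of the inductive hypothesis, $R^{\pi}_N(x_{k+1}) \leq r^{\prime}(x_{k+1})$ for every $x_{k+1}$; monotonicity of the coherent one-step measure $\risk_k$ then gives $\risk_k(R^{\pi}_N(x_{k+1})) \leq \risk_k(r^{\prime}(x_{k+1}))$. Combined with the constraint $d(x_k, u^{*}) + \risk_k(r^{\prime}(x_{k+1})) \leq r_k$ encoded in $F_k(x_k, r_k)$, this gives $R^{\pi}_N(x_k) \leq r_k$ and closes the induction. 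Specializing the joint statement to $k = 0$ with the given $r_0 \in \Phi_0(x_0)$ shows that $\pi$ is feasible and attains $V_0(x_0, r_0)$, the optimal value of problem $\mathcal{OPT}$; hence $\pi$ is optimal.

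The step I expect to be the main obstacle is making the one-step risk decomposition fully rigorous: it requires carefully unrolling the nested composition in \eqref{eq:tcrisk}, recognizing the inner block $\risk_{k+1}(\cdots)$ as $R^{\pi}_N(x_{k+1})$, and appealing to the Markov property \eqref{eq:Markov} so that $\risk_k$ acts on the successor risk as a function on $S$ rather than on the entire history. A secondary bookkeeping difficulty is that the threshold $r_k$ is path-dependent, generated online along the trajectory, so the inductive claim must be phrased uniformly over all admissible pairs $(x_k, r_k)$ with the threshold serving as an augmented state; it is precisely the monotonicity of $\risk_k$ that allows feasibility to propagate backward through this augmented recursion.
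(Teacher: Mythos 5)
Your proposal is correct and follows essentially the same route as the paper's proof: a backward induction over the starting stage, establishing jointly that the recursively generated tail policy is feasible (via the one-step decomposition $R^{\pi}_N(x_k)=d(x_k,u^*)+\risk_k(R^{\pi}_N(x_{k+1}))$, monotonicity of $\risk_k$, and the constraint encoded in $F_k$) and cost-optimal (via the tower property and Bellman's equation from Theorem \ref{TC_good}). The only cosmetic difference is that you anchor the induction at $k=N$ rather than at $k=N-1$ as the paper does, which changes nothing of substance.
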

\begin{proof}
As usual for dynamic programming problems, the proof uses induction arguments (see, in particular, \cite{bertsekas_05} and \cite[Theorem 4]{Chen_07} for a similar proof in the risk-neutral case).

Consider a tail subproblem starting at stage $k$, for $k=0,\ldots,N-1$; for a given initial state $x_k\in S$ and constraint threshold $r_k \in \Phi_k(x_k)$, let $\pi^{k, r_k} \in \Pi_k$ be a policy recursively defined as follows:
\[
\pi_j^{k, r_k}(h_{k,j}) = u^*(x_j, r_j)  \text{ with  } r_j=r^{\prime}(x_{j-1}, r_{j-1})(x_j),
\]
when $j\in \{k+1,\ldots, N-1\}$, and
\[
\pi_k^{k, r_k}(x_k) = u^*(x_k, r_k).
\]
We prove by induction that $\pi^{k, r_k}$ is optimal. Clearly, for $k=0$, such result implies the claim of the theorem.

Let $k=N-1$ (base case). In this case the tail subproblem is:
\begin{alignat*}{2}
\min_{\pi \in \Pi_{N-1}} & & \quad&c(x_{N-1}, \pi(x_{N-1})) \\  
\text{subject to} & & \quad& d(x_{N-1}, \pi(x_{N-1})) \leq r_{N-1}.
\end{alignat*}
Since, by definition, $r^{\prime}(x_N)$ and $V_N(x_N, r_N)$ are identically equal to zero, and due to the positive homogeneity of one-step conditional risk measures, the above tail subproblem is identical to the optimization problem in the Bellman's recursion \eqref{eq:T}, hence $\pi^{N-1, r_{N-1}}$ is optimal.

Assume as induction step that $\pi^{k+1,r_{k+1}}$ is optimal for the tail subproblems starting at stage $k+1$ with $x_{k+1}\in S$ and $r_{k+1} \in \Phi_{k+1}(x_{k+1})$. We want to prove that $\pi^{k, r_k}$ is optimal for the tail subproblems starting at stage $k$ with initial state $x_k\in S$ and constraint threshold $r_k \in \Phi_k(x_k)$. First, we prove that $\pi^{k, r_k}$ is a feasible control policy. Note that, from the recursive definitions of $\pi^{k, r_k}$ and $\pi^{k+1, r_{k+1}}$, one has
\[
R_N^{\pi^{k, r_k}}(x_{k+1}) = R_N^{\pi^{k+1, r^{\prime}(x_k, r_k)(x_{k+1})}}(x_{k+1}).
\]
Hence, one can write:
\begin{equation}
\begin{split}
&R_N^{\pi^{k, r_k}}(x_k) = d(x_k, u^*(x_k, r_k)) + \risk_k(R_N^{\pi^{k, r_k}}(x_{k+1}))=\\
&\qquad d(x_k, u^*(x_k, r_k)) + \risk_k\Bigl(R_N^{\pi^{k+1, r^{\prime}(x_k, r_k)(x_{k+1})}}(x_{k+1})\Bigr)\leq\\
&\qquad d(x_k, u^*(x_k, r_k)) + \risk_k\Bigl(r^{\prime}(x_k, r_k)(x_{k+1})\Bigr) \leq r_k,
\end{split}
\end{equation}
where the first inequality follows from the inductive step and the monotonicity of coherent one-step conditional risk measures, and the last step follows from the definition of $u^*$ and $r^{\prime}$. Hence, $\pi^{k, r_k}$ is a feasible control policy (assuming initial state $x_k\in S$ and constraint threshold $r_k \in \Phi_k(x_k)$). As for its cost, one has, similarly as before,
\[
J_N^{\pi^{k, r_k}}(x_{k+1}) = J_N^{\pi^{k+1, r^{\prime}(x_k, r_k)(x_{k+1})}}(x_{k+1}).
\]
Then, one can write:
\begin{equation}
\begin{split}
&J_N^{\pi^{k, r_k}}(x_k) = c(x_k, u^*(x_k, r_k)) + \expectation{J_N^{\pi^{k, r_k}}(x_{k+1})}=\\
&\qquad c(x_k, u^*(x_k, r_k)) + \expectation{ J_N^{\pi^{k+1, r^{\prime}(x_k, r_k)(x_{k+1})}}(x_{k+1})}=\\
&\qquad c(x_k, u^*(x_k, r_k)) +\expectation{V_{k+1}(x_{k+1}),  r^{\prime}(x_k, r_k)(x_{k+1})}=\\
&\qquad T_k[V_{k+1}](x_k, r_k) = V_k(x_k, r_k),
\end{split}
\end{equation}
where the third equality follows from the inductive step, the fourth equality follows form the definition of the dynamic programming operator in equation \eqref{eq:T}, and the last equality follows from Theorem \ref{TC_good}. Since policy $\pi^{k, r_k}$ is feasible and achieves the optimal cost, it is optimal. This concludes the proof.
\end{proof}

Note that the optimal policy in the statement of Theorem \ref{them:optPoli} can be written in ``compact" form without the aid of the extra variable $r_k$. Indeed, for $k=1$, by defining the threshold transition function $\mathcal R_1(h_{0,1}):=r'(x_{0},r_{0})(x_1)$, one can write $r_1=\mathcal{R}_1(h_{0,1})$. Then, by induction arguments, one can write, for any $k\in\{1,\ldots,N\}$, $r_{k}=\mathcal{R}_k(h_{0,k})$, where $\mathcal R_k$ is the threshold transition function at stage $k$. Therefore, the optimal policy in the statement of Theorem \ref{them:optPoli} can be written as $\pi(h_{0,k}) = u^*(x_k, \mathcal R_k(h_{0,k}))$, which makes explicit the dependency of $\pi$ over the process history.

Interestingly, if one views the constraint thresholds as state variables, the optimal policies of problem $\mathcal{OPT}$ have a Markovian structure with respect to the augmented control problem. 

\subsection{Computational issues}
In our approach, the solution of problem $\mathcal{OPT}$ entails the solution of two dynamic programing problems, the first one to find the lower bound for the set of feasible constraint thresholds (i.e., the function $\underline{R}(x)$, see Section \ref{sec:dp}), and the second one to compute the value functions $V_k(x_k, r_k)$. The latter problem is the most challenging one since it involves a functional minimization. However, as already noted, since $S$ is finite, $B(S)$ is isomorphic with $\mathbb{R}^{|S|}$, and the functional minimization in the Bellman operator \eqref{eq:T} can be re-casted as an optimization problem in the Euclidean space. This problem, however, can be large and, in general, is not convex. 

\begin{figure}[h]
  \centering
  {
  \includegraphics[width = 0.23\textwidth]{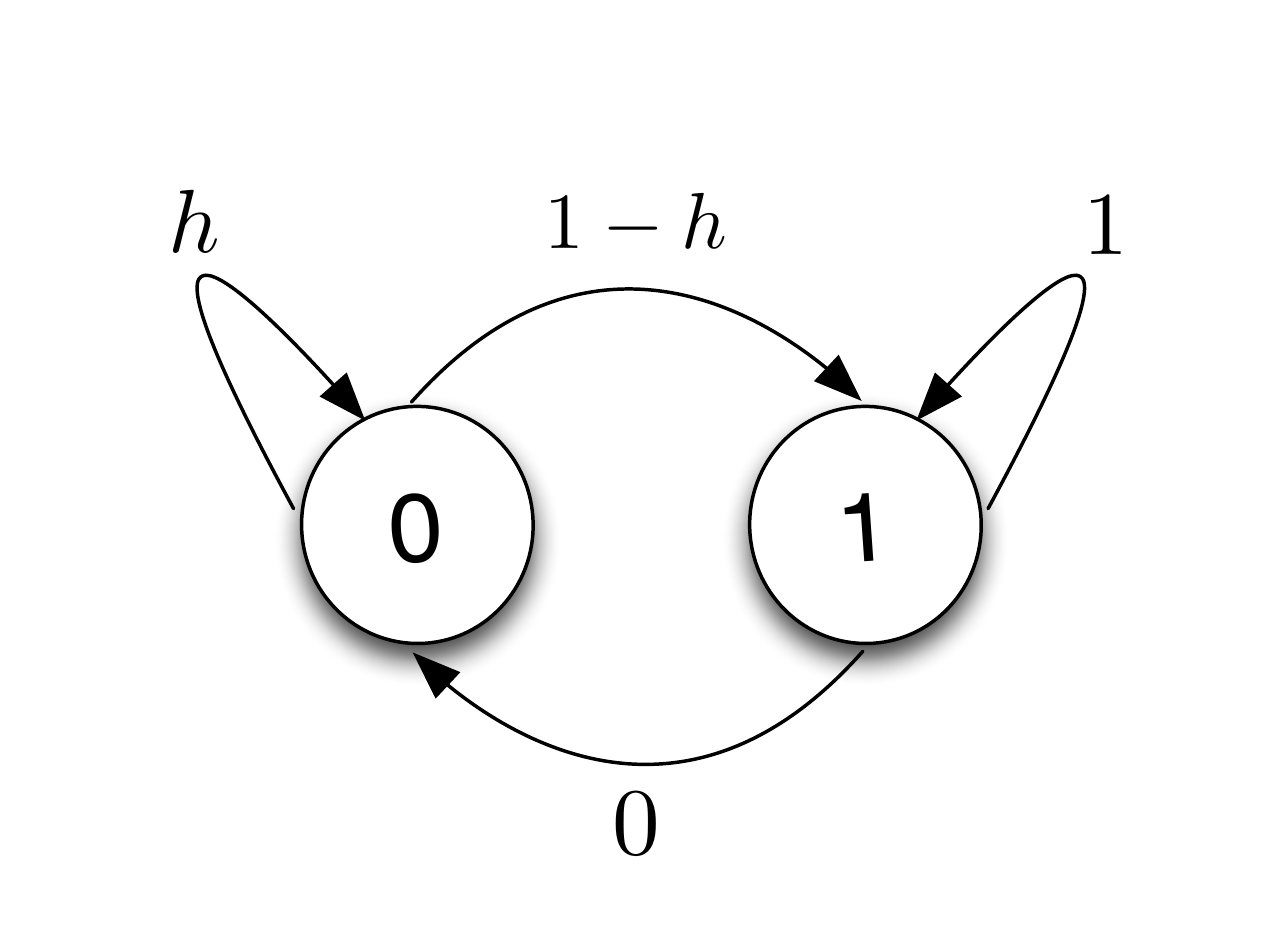}
  \includegraphics[width = 0.23\textwidth]{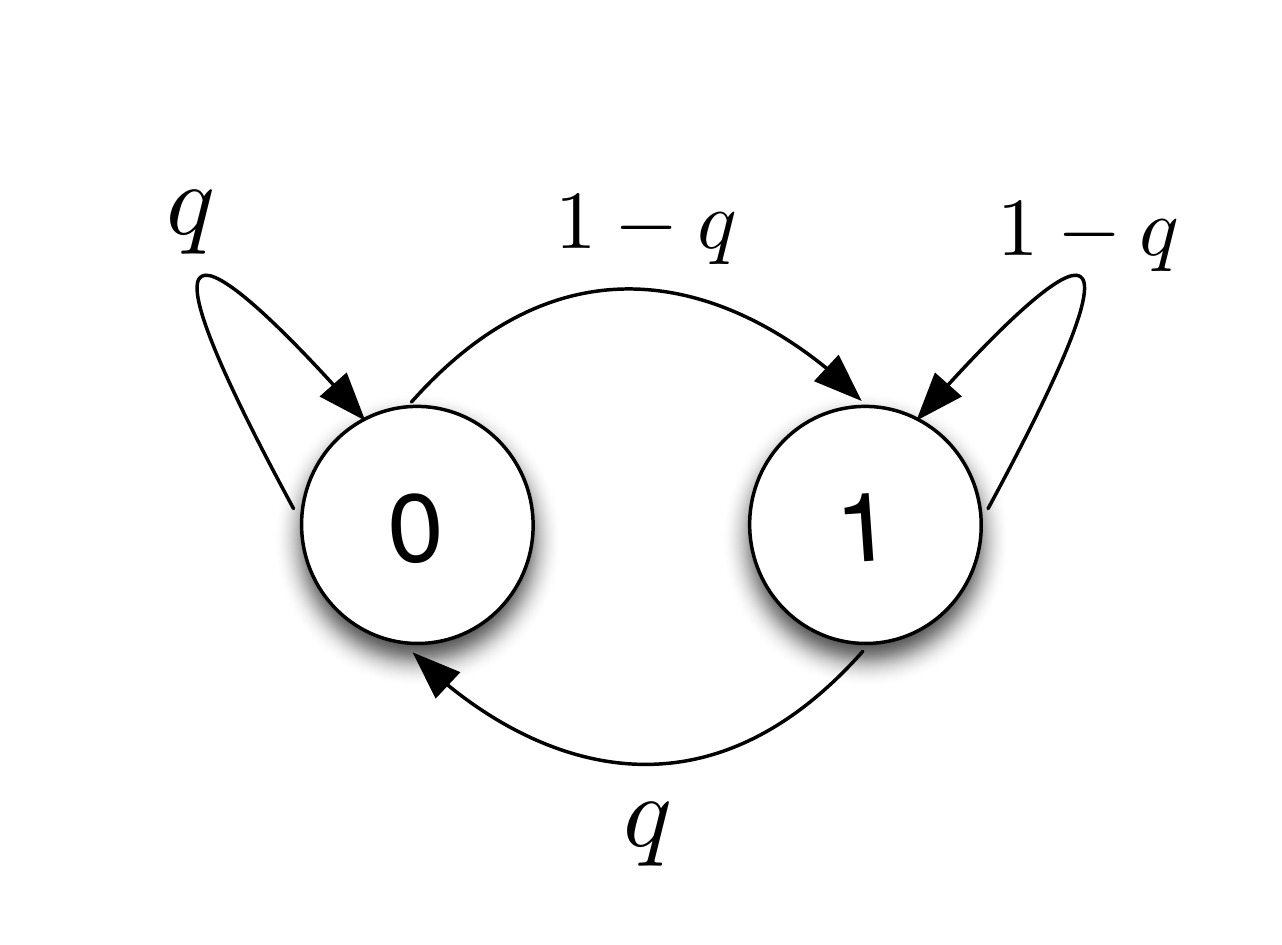}
}
  \caption{Left figure: transition probabilities for control $u=0$. Right figure: transition probabilities for control $u=1$. Circles represent states. The transition probabilities satisfy $1\geq q>h\geq 0$.}
  \label{fig:tp}
\end{figure}

\subsection{System maintenance example}
Finally, we illustrate the above concepts with a simple two-stage (i.e., $N=2$) example that represents the problem of scheduling maintenance operations for a given system. The state space is given by $S=\{0,1\}$, where $\{0\}$ represents a normal state and $\{1\}$ represents a failure state; the control space is given by $U=\{0,1\}$, where $\{0\}$ means ``do nothing" and $\{1\}$ means ``perform maintenance". The transition probabilities are given in Figure \ref{fig:tp} for some $1\geq q>h\geq 0$.
Also, the cost functions and the constraint cost functions are as follows:
\begin{alignat*}{1}
&c(0,0)=c(1,0)=0,\quad c(0,1)=c(1,1)=c_2,\\
&d(0,1)=d(0,0)=0,\quad d(1,0)=d(1,1)=c_1\in(0,1).
\end{alignat*}
The terminal costs are zero. The one-step conditional risk measures is the mean semi-deviation (see equation (\ref{MD_risk})) with fixed $\lambda\in [0, 1]$ and $p\in[1,\infty)$. We wish to solve problem $\mathcal{OPT}$ for this example.

Note that, for any $\lambda$ and $p$, function
\begin{alignat*}{1}
f(x):=\lambda x(1-x)^{1/p}+(1-x)
\end{alignat*} 
is a non-increasing function in $x\in[0,1]$. Therefore, $f(q)\leq f(p)\leq f(0)$.
At stage $k=2$, $V_2(1,r_2)=V_2(0,r_2)=0$, and $\Phi_2(1)=\Phi_2(0)=\{0\}$. At stage $k=1$,
\begin{alignat*}{1}
&V_1(0,r_1)=\left\{\begin{array}{ll}
0&\!\textrm{if }r_1\geq 0,\\
\bar{C}&\!\textrm{else}.
\end{array}\right.\\
&V_1(1,r_1)=\left\{\begin{array}{ll}
0&\!\textrm{if }r_1\geq c_1,\\
\bar{C}&\!\textrm{else}.
\end{array}\right.\nonumber 
\end{alignat*}
Also, $\Phi_1(0)=[0,\infty)$ and $\Phi_1(1)=[c_1,\infty)$. At stage $k=0$, define $K^{(x)}:=f(x)c_1$ (hence $K^{(0)}=c_1$) and
\begin{alignat*}{1}
&E_x(r'(0),r'(1)):=r'(0)x+r'(1)(1-x)\\
&M_x(r'(0),r'(1)):=\left(\small\begin{array}{l}
(1-x)[r'(1)-E_x(r'(0),r'(1))]_+^p\\
+x[r'(0)-E_x(r'(0),r'(1))]_+^p
\end{array}\right)^{1/p};
\end{alignat*}
hence, $E_0(r'(0),r'(1))=r'(1)$ and $M_0(r'(0),r'(1))=0$. Then, we can write
\begin{alignat*}{1}
\begin{split}
F_0(0,r_0)=&\emptyset\quad\textrm{if }r_0<K^{(q)}\\
F_0(0,r_0)=&\{(1,r'):r'(0)\in[0,\infty),r'(1)\in[c_1,\infty), \\
&E_q(r'(0),r'(1))+\lambda M_q(r'(0),r'(1))\leq r_0\}\\
&\textrm{if }K^{(q)}\leq r_0<K^{(h)}\\
F_0(0,r_0)=&\{(1,r'):r'(0)\in[0,\infty),r'(1)\in[c_1,\infty), \\
&E_q(r'(0),r'(1))+\lambda M_q(r'(0),r'(1))\leq r_0\}\\
&\bigcup\{(0,r'):r'(0)\in[0,\infty),r'(1)\in[c_1,\infty), \\
&\qquad E_h(r'(0),r'(1))+\lambda M_h(r'(0),r'(1))\leq r_0\}\\
&\textrm{if } r_0\geq K^{(h)}\\
\end{split}
\end{alignat*}
\begin{alignat*}{1}
\begin{split}
F_0(1,r_0)=&\emptyset\quad\textrm{if }r_0<c_1+K^{(q)}\\
F_0(1,r_0)=&\{(1,r'):r'(0)\in[0,\infty),r'(1)\in[c_1,\infty), \\
&c_1+E_q(r'(0),r'(1))+\lambda M_q(r'(0),r'(1))\leq r_0\}\\
&\textrm{if }c_1+K^{(q)}\leq r_0<c_1+K^{(0)}\\
F_0(1,r_0)=&\{(1,r'):r'(0)\in[0,\infty),r'(1)\in[c_1,\infty), \\
&c_1+E_q(r'(0),r'(1))+\lambda M_q(r'(0),r'(1))\leq r_0\}\\
&\bigcup\{(0,r'):r'(0)\in[0,\infty),r'(1)\in[c_1,\infty), \\
&\qquad c_1+r'(1)\leq r_0\}\\
&\textrm{if } r_0\geq c_1+K^{(0)}\\
\end{split}
\end{alignat*}
As a consequence,
\begin{alignat*}{1}
&V_0(1,r_0)=\left\{\begin{array}{ll}
\bar{C}&\textrm{if }r_0<c_1+K^{(q)}\\
c_2&\textrm{if }c_1+K^{(q)}\leq r_0<c_1+K^{(0)}\\
0&\textrm{if } r_0\geq K^{(0)}\\
\end{array}\right.\nonumber\\
&V_0(0,r_0)=\left\{\begin{array}{ll}
\bar{C}&\textrm{if }r_0<K^{(q)}\\
c_2&\textrm{if }K^{(q)}\leq r_0<K^{(h)}\\
0&\textrm{if } r_0\geq K^{(h)}\\
\end{array}\right.\nonumber\\
\end{alignat*}
Therefore, for $V_0(1,c_1+K^{(q)})$, the infimum of the Bellman's equation is attained with $u=1$, $r'(0)=0$, $r'(1)=c_1$. For $V_0(0,K^{(h)})$, the infimum of the Bellman's equation is attained with $u=0$, $r'(0)=0$, $r'(1)=c_1$. Note that, as expected, the value function is a decreasing function with respect to the risk threshold.
\addtolength{\textheight}{-13cm} 
\section{Conclusions}\label{sec:conc}
In this paper we have presented a dynamic programing approach to stochastic optimal control problems with dynamic, time-consistent (in particular Markov) risk constraints. We have shown that the optimal cost functions can be computed by value iteration and that the optimal control policies can be constructed recursively. This paper leaves numerous important extensions open for further research. First, it is of interest to study how to carry out the Bellman's equation efficiently; a possible strategy involving convex programming has been briefly discussed. Second, to address problems with large state spaces, we plan to develop approximate dynamic programing algorithms for problem $\mathcal{OPT}$. Third, it is of both theoretical and practical interest to study the relation between stochastic optimal control problems with time-consistent and time-inconsistent constraints, e.g., in terms of the optimal costs. Fourth, we plan to extend our approach to the case with partial observations and an infinite horizon. Finally, we plan to apply our approach to real settings, e.g., to the architectural analysis of planetary missions or to the risk-averse optimization of multi-period investment strategies. 

\bibliographystyle{unsrt} 
\bibliography{ref_dyn_pro}  

\begin{thebibliography}{10}

\bibitem{Altman_99}
E.~Altman.
\newblock {\em {Constrained Markov Decision Processes}}.
\newblock Boca Raton, FL: Chapman \& Hall/CRC, 1999.

\bibitem{Korilis_95}
Y.~A. Korilis and A.~A. Lazar.
\newblock On the existence of equilibria in noncooperative optimal flow
  control.
\newblock {\em J. ACM}, 42(3):584--613, May 1995.

\bibitem{Nain_86}
P.~Nain and K.~Ross.
\newblock Optimal priority assignment with hard constraint.
\newblock {\em Automatic Control, IEEE Transactions on}, 31(10):883 -- 888, oct
  1986.

\bibitem{Chen_04}
R.~Chen and G.~Blankenship.
\newblock Dynamic programming equations for discounted constrained stochastic
  control.
\newblock {\em IEEE Transaciton of Automatic Control}, 49(5):699--709, 2004.

\bibitem{Piunovskiy_06}
A.~Piunovskiy.
\newblock Dynamic programming in constrained markov decision process.
\newblock {\em Control and Cybernetics}, 35(3):646--660, 2006.

\bibitem{Chen_07}
R.~Chen and E.~Feinberg.
\newblock Non-randomized policies for constrained markov decision process.
\newblock {\em Mathematical Methods in Operations Research}, 66:165--179, 2007.

\bibitem{Pavone_12}
M.~Pavone Y.~Kuwata and J.~Balaram.
\newblock A risk-constrained multi-stage decision making approach to the
  architectural analysis of mars missions.
\newblock In {\em IEEE Conference on Decision and Control}, 2012.

\bibitem{Sniedovich_80}
M.~{Sniedovich}.
\newblock {A Variance-Constrained Reservoir Control Problem}.
\newblock {\em Water Resources Research}, 16:271--274, 1980.

\bibitem{Mannor_11}
S.~Mannor and J.~N. Tsitsiklis.
\newblock {Mean-Variance Optimization in Markov Decision Processes}.
\newblock In {\em International Conference on Machine Learning}, 2011.

\bibitem{Iancu_11}
P.~{Huang}, D.~A. {Iancu}, M.~{Petrik}, and D.~{Subramanian}.
\newblock {The Price of Dynamic Inconsistency for Distortion Risk Measures}.
\newblock {\em ArXiv e-prints}, June 2011.

\bibitem{Rudloff_11}
B.~Rudloff, A.~Street, and D.~Valladao.
\newblock Time consistency and risk averse dynamic decision models:
  Interpretation and practical consequences, 2011.

\bibitem{rus_shapiro_2004}
A.~Ruszczynski and A.~Shapiro.
\newblock Optimization of risk measures.
\newblock Risk and Insurance 0407002, EconWPA, 2004.

\bibitem{rus_shapiro_06}
A.~Ruszczynski and A.~Shapiro.
\newblock Conditional risk mappings.
\newblock {\em Mathematics of operations research}, 21(3):544--561, 2006.

\bibitem{rus_shapiro_06_2}
A.~Ruszczynski and A.~Shapiro.
\newblock Optimization of convex risk functinos.
\newblock {\em Mathematics of operations research}, 31(3):433--452, 2006.

\bibitem{rus_09}
A.~Ruszczynski.
\newblock Risk averse dynamic programming for markov decision process.
\newblock {\em Journal of Mathematical Programming}, 125(2):235--261, 2010.

\bibitem{shapiro_12}
A.~Shapiro.
\newblock Minimax and risk averse multistage stochastic programming.
\newblock {\em European Journal of Operational Research}, 219(3):719--726,
  2012.

\bibitem{Cheridito_11}
P.~Cheridito and M.~Kupper.
\newblock Composition of time consistent dynamic monetary risk measures in
  discrete time.
\newblock {\em International Journal of Theoretical and Applied Finance},
  14(1):137--162, 2011.

\bibitem{Penner_06}
H.~F{\"o}llmer and I.~Penner.
\newblock Convex risk measures and the dynamics of their penalty functions.
\newblock {\em Statistics \& Decisions}, 24(1):61--96, 2012/09/15 2006.

\bibitem{Cheridito_09}
P.~Cheridito and M.~Stadjie.
\newblock Time inconsistency of var and time-consistent alternatives.
\newblock {\em Finance Research Letters.}, 6(1):40--46, 2009.

\bibitem{shapiro_09}
A.~Shapiro.
\newblock On a time consistency concept in risk averse multi-stage stochastic
  programming.
\newblock {\em Operations Research Letters}, 37(3):143--147, 2009.

\bibitem{bertsekas_05}
D.~Bertsekas.
\newblock {\em Dynamic programming and optimal control}.
\newblock Athena Scientific, 2005.

\end{thebibliography}

\end{document}